\newcommand{\E}{\mathbb{E}\,}
\renewcommand{\leq}{\leqslant}
\renewcommand{\geq}{\geqslant}
\newtheorem{theorem}{Theorem}[section]
 \newtheorem{lemma}[theorem]{Lemma}
 \newdefinition{remark}[theorem]{Remark}
 \newdefinition{example}[theorem]{Example}
\newdefinition{definition}[theorem]{Definition}
 \newproof{proof}{Proof}
 \newproof{pfs}{Proof of Theorem~\ref{th:support}}
 \newproof{pfd}{Proof of Lemma~\ref{lm:delta}}
\newcommand{\N}{\mathbb{N}}
\newcommand{\lcx}{\leq_{\text{\rm cx}}}
\newcommand{\lst}{\leq_{\text{\rm st}}}
\newcommand{\gst}{\geq_{\text{\rm st}}}
\newcommand{\C}{\mathcal{C}}
\numberwithin{equation}{section}
\journal{\phantom{.}}
\begin{document}

\begin{frontmatter}



\title{Muirhead inequality for convex orders and a problem of I.\ Ra\c{s}a on Bernstein polynomials}

\author{Andrzej Komisarski}
\ead{andkom@math.uni.lodz.pl}
\address{Department of Probability Theory and Statistics, Faculty of Mathematics and Computer Science,
University of \L\'od\'z, ul. Banacha 22, 90-238 \L\'od\'z, Poland}
\author{Teresa Rajba}
\ead{trajba@ath.bielsko.pl}
\address{Department of Mathematics, University of Bielsko-Biała, ul. Willowa 2, 43-309 Bielsko-Biała, Poland}

\begin{abstract}

We present a new, very short proof of a conjecture by I.\ Ra\c{s}a, which is an inequality involving basic Bernstein polynomials and convex functions.
It was affirmed positively very recently by J.\ Mrowiec, T.\ Rajba and S.\ Wąsowicz (2017) by the use of stochastic convex orders,
as well as by Abel (2017) who simplified their proof. We give a useful sufficient condition for the verification
of some stochastic convex order relations, which in the case of binomial distributions are equivalent to the I.\ Ra\c{s}a inequality. We give also the corresponding inequalities for other distributions.
Our methods allow us to give some extended versions of stochastic convex orderings as well as the I.\ Ra\c{s}a type inequalities.
In particular, we prove the Muirhead type inequality for convex orders for convolution polynomials of probability distributions.
\end{abstract}

\begin{keyword}
Bernstein polynomials\sep stochastic order\sep stochastic convex order \sep convex functions
\sep functional inequalities related to convexity \sep Muirhead inequality

\MSC[2010] Primary 26D15 \sep Secondary 60E15 \sep 39B62

\end{keyword}

\end{frontmatter}


\section{Introduction}
For $n\in\N$ and $i=0,1,\dots,n$, let $b_{n,i}(x)=\binom{n}{i}x^i(1-x)^{n-i}$ ($x\in[0,1]$)
denote the Bernstein basic polynomials.
The Bernstein operator $B_n:\C([0,1])\to\C([0,1])$ (\cite{Adell1996, Adell1999}) associates to each continuous function $\varphi:[0,1]\to\mathbb R$ the function
$B_n(\varphi):[0,1]\to\mathbb R$ given by
$B_n(\varphi)(x)=\sum_{i=0}^n b_{n,i}(x)\varphi\left(\frac in\right)$.
Recently, J.\ Mrowiec, T.\ Rajba and S.\ Wąsowicz \cite{MRW2017} proved the following theorem on inequality for Bernstein operators:
\begin{theorem}\label{th:1}
Let $n\in\N$ and $x,y\in[0,1]$. Then
\begin{equation}\label{eq:Rasa}
 \sum_{i=0}^n \sum_{j=0}^n \left(b_{n,i}(x)b_{n,j}(x)+b_{n,i}(y)b_{n,j}(y)-2b_{n,i}(x)b_{n,j}(y)\right)\varphi\left(\tfrac{i+j}{2n}\right)\geq0
\end{equation}
for all convex functions $\varphi\in\C([0,1])$.
\end{theorem}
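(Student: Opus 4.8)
The plan is to recognize the left-hand side of \eqref{eq:Rasa} as the integral of $\varphi$ against a signed measure that happens to be a self-convolution, and then to prove positivity by a double summation by parts. Setting $c_k=b_{n,k}(x)-b_{n,k}(y)$ for $0\le k\le n$, a direct expansion of the products shows that the left-hand side of \eqref{eq:Rasa} equals
\[ \sum_{i=0}^{n}\sum_{j=0}^{n} c_i c_j\,\varphi\!\left(\tfrac{i+j}{2n}\right). \]
Probabilistically this is $\E\varphi(S/2n)+\E\varphi(T/2n)-2\E\varphi(U/2n)$, where $S,T$ are binomial $(2n,x)$, $(2n,y)$ and $U$ is a sum of independent $(n,x)$ and $(n,y)$ binomials; the claim is thus the convex-order statement that $U$ is dominated by the equal mixture of $S$ and $T$. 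By continuity of both sides in $(x,y)$ it suffices to treat $0<x<y<1$.

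The key observation is an identity for the associated generating polynomial. Since $\sum_k b_{n,k}(x)t^k=(1-x+xt)^n$, we have $\sum_k c_k t^k=(1-x+xt)^n-(1-y+yt)^n=:C(t)$, so the self-convolution $a_k:=\sum_{i+j=k}c_ic_j$ has generating polynomial $C(t)^2\ge 0$. Because $C(1)=0$ we get $\sum_k a_k=C(1)^2=0$ and $\sum_k k\,a_k=2C(1)C'(1)=0$, so the signed sequence $(a_k)$ has vanishing zeroth and first moments (in particular the two distributions above share a mean). Next I would show that $(c_k)$ changes sign exactly once: writing $c_k=b_{n,k}(y)\big(r_k-1\big)$ with $r_k=(x/y)^k((1-x)/(1-y))^{n-k}$ strictly decreasing in $k$, and noting $c_0=(1-x)^n-(1-y)^n>0$ while $c_n=x^n-y^n<0$, the sequence is $+$ up to some index $r$ and $-$ afterwards. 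Consequently the partial sums $\Gamma_\ell:=\sum_{k\le\ell}c_k$ rise to $\Gamma_r>0$ and then descend to $\Gamma_n=0$, so $\Gamma_\ell\ge 0$ for every $\ell$.

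Finally I would integrate by parts twice. Writing $\psi(k)=\varphi(k/2n)$ and using $\sum_k a_k=\sum_k k\,a_k=0$, Abel summation gives
\[ \sum_k a_k\psi(k)=\sum_m \tilde A_m\,\big(\psi(m+2)-2\psi(m+1)+\psi(m)\big), \]
where $\tilde A_m=\sum_{k\le m}\sum_{\ell\le k}a_\ell=\sum_i c_i\hat\Gamma_{m-i}$ and $\hat\Gamma_\ell=\sum_{k\le\ell}\Gamma_k$. Since $\varphi$ is convex and the nodes are equally spaced, the second differences of $\psi$ are nonnegative, so it remains to prove $\tilde A_m\ge 0$ for every $m$. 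This positivity is the heart of the argument. Note $\hat\Gamma$ is nondecreasing, its increments being $\Gamma_\ell\ge 0$; splitting at the sign change of $(c_k)$ by writing $c_i=u_i\ge 0$ for $i\le r$ and $c_i=-w_i$ with $w_i\ge 0$ for $i>r$, and using monotonicity of $\hat\Gamma$ together with $\sum_i u_i=\sum_i w_i=:\mu$, I obtain
\[ \tilde A_m=\sum_{i\le r}u_i\hat\Gamma_{m-i}-\sum_{i> r}w_i\hat\Gamma_{m-i}\ge \mu\hat\Gamma_{m-r}-\mu\hat\Gamma_{m-r-1}=\mu\,\Gamma_{m-r}\ge 0. \]
The main obstacle is exactly this last step: the squared-generating-function identity and the single sign change of $(c_k)$ are what make it tractable, the former supplying the two moment conditions (hence equal means) and the latter supplying $\Gamma\ge 0$ and the monotonicity of $\hat\Gamma$ that drives the final estimate.
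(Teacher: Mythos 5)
Your proof is correct, but it takes a genuinely different route from the paper's. The paper's argument is probabilistic: it first proves the general fact (Theorem \ref{th:condition}) that $\mu\lst\nu$ implies $\mu*\nu\lcx\tfrac12(\mu*\mu+\nu*\nu)$, using the coupling characterization of the usual stochastic order (Theorem \ref{th:1a1}) together with the four-point Hardy--Littlewood--P\'olya inequality (Remark \ref{th:HLP}), and then specializes to $\mu=B(n,x)$, $\nu=B(n,y)$ via the stochastic comparability of binomials (Lemma \ref{th:rozklady}a). You prove the same convex-order statement analytically: after writing the left side as the quadratic form $\sum_{i,j}c_ic_j\varphi\bigl(\tfrac{i+j}{2n}\bigr)$, your double Abel summation reduces the claim to the nonnegativity of the doubly cumulated sums $\tilde A_m$ (together with the two vanishing moments supplied by $C(1)=0$), which is precisely the distribution-function characterization of $\lcx$ for lattice measures; you then verify $\tilde A_m\ge0$ by hand using the single sign change of $(c_k)$. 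Note that your inequality $\Gamma_\ell\ge0$ is exactly the statement $B(n,x)\lst B(n,y)$ of Lemma \ref{th:rozklady}a, re-proved from scratch via the monotone ratio $r_k$; so in effect you re-derive both of the paper's ingredients, but replace the coupling step by summation by parts. The trade-off: your proof is self-contained and elementary (no Strassen-type coupling, no external citation), much in the spirit of Abel's proof mentioned in the introduction; the paper's proof is shorter given its toolbox and, more importantly, is distribution-free --- the identical coupling argument yields parts b)--f) of Theorem \ref{th:Rasa} for Poisson, negative binomial, gamma, beta and Gaussian laws, and scales up to the Muirhead-type Theorem \ref{th:main}, whereas your summation-by-parts machinery is tailored to finitely supported integer-valued measures and would need nontrivial modification (convergence of the iterated sums, or an integration-by-parts analogue) to cover those cases.
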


This inequality involving Bernstein basic polynomials and convex functions was conjectured as an open problem 25 years ago by I.\ Ra\c{s}a.
J.\ Mrowiec, T.\ Rajba and S.\ Wąsowicz \cite{MRW2017} showed that the conjecture is true. Their proof makes heavy use of probability theory.
As a tool they applied a concept of stochastic convex orders (which they proved for binomial distributions) as well as the so-called concentration inequality. Later
U.\ Abel \cite{Abel2016} gave an elementary proof of the above theorem, which was much shorter than that given in \cite{MRW2017}.
In this paper, we present a new proof of the above theorem, which is significantly simpler and shorter than that given by U.\ Abel \cite{Abel2016}
(cf. Theorem \ref{th:condition} and Theorem \ref{th:Rasa}a). As a tool we use both stochastic convex orders as well as the usual stochastic order.

In \cite{Abel2016}, U.\ Abel studied Mirakyan--Sz\'asz operators $S_n:D_S\to\C([0,\infty))$ (where $D_S\subset\C([0,\infty))$
consists of functions of at most exponential growth) given by
$S_n(\varphi)(x)=\sum_{i=0}^{\infty}s_{i}(nx)\varphi\left(\tfrac in\right)$,
where $s_{i}(x)=e^{-x}\cdot\frac{x^i}{i!}$ are the corresponding basic functions,
and he proved the following inequality (corresponding to the inequality \eqref{eq:Rasa}) for these operators and convex functions $\varphi\in D_S$
\begin{equation}\label{eq:Rasabis}
 \sum_{i=0}^\infty\sum_{j=0}^\infty\left(s_i(x)s_j(x)+s_i(y)s_j(y)-2s_{i}(x)s_{j}(y)\right)\varphi\left(\tfrac{i+j}{2n}\right)\geq0\quad\text{for }x,y\in[0,\infty).
\end{equation}

U.\ Abel \cite{Abel2016} considered also Baskakov operators $V_n:D_{V_n}\to\C([0,\infty))$
(where $D_{V_n}\subset\C([0,\infty))$) given by
$V_n(\varphi)(x)=\sum_{i=0}^{\infty}v_{n,i}(x)\varphi\left(\tfrac in\right)$,
where $v_{n,i}(x)=\binom{n+i-1}i\cdot\frac{x^i}{(1+x)^{n+i}}$ (with $x\in[0,\infty)$) are the Baskakov basic functions,
and proved the corresponding inequality
\begin{equation}\label{eq:baskRasa}
 \sum_{i=0}^{\infty} \sum_{j=0}^{\infty} \left(v_{n,i}(x)v_{n,j}(x)+v_{n,i}(y)v_{n,j}(y)-2v_{n,i}(x)v_{n,j}(y)\right)\varphi\left(\tfrac{i+j}{2n}\right)\geq0\quad\text{for }x,y\in[0,\infty).
\end{equation}

Note, that the basic functions, which appear in the formulas for the operators $B_n$, $S_n$ and $V_n$, are the probabilities
of binomial, Poisson, and negative binomial distributions, respectively.
In this paper, we study some other families of probability distributions that can be used as basic functions for operators and inequalities associated with these operators.

In Section 2, we prove Theorem \ref{th:condition} on convex orders, which is a useful tool for proving
inequalities \eqref{eq:Rasa}, \eqref{eq:Rasabis}, \eqref{eq:baskRasa} and many similar inequalities
(we call them I.\ Ra\c{s}a type inequalities).
In particular, using this theorem for binomial distributions, Poisson distributions and negative binomial distributions,
we obtain new, very short proofs of inequalities \eqref{eq:Rasa}, \eqref{eq:Rasabis} and \eqref{eq:baskRasa}.

The Bernstein-Schnabl operators $B_n$, Mirakyan-Sz\'asz operators $S_n$ and Baskakov operators $V_n$
are Markov operators and they share the following property: if $T$ is any of these operators and function
$\varphi$ is affine, then $T(\varphi)=\varphi$. This nice property has been investigated by many authors.
In particular, in \cite{Rasa2016, Rasa2014, Rasa2017} the authors discussed the theory of Markov operators
and partial differential equations related to Markov operators.
I.\ Ra\c{s}a's conjecture (Theorem~\ref{th:1}) and other I.\ Ra\c{s}a type inequalities
have their consequences in this theory (see \cite{Rasa2016, Rasa2014, Rasa2017} for the details).
In Remark~\ref{th:remark} we consider the other examples of such operators.

In Section 3, we give a strong generalization of Theorem \ref{th:condition} and use it
to obtain subsequent generalizations of the inequality \eqref{eq:Rasa}.
The main result of Section 3 (Theorem \ref{th:main}) is the Muirhead type inequality for convex orders
for convolution polynomials of probability distributions.

\section{The I.\ Ra\c{s}a type inequalities}

In the sequel we make use of the theory of stochastic orders. Let us recall some basic notations and results (see \cite{Shaked2007}).
Let $\mu$ be a probability distribution (a Borel measure on $\mathbb R$ satisfying $\mu(\mathbb R)=1$).
For $x\in\mathbb R$ the delta symbol $\delta_x$ denotes one-point probability distribution satisfying $\delta_x(\{x\})=1$.
As usual, $F_\mu(x)=\mu((-\infty,x])$ ($x\in\mathbb R$) stands for the cumulative distribution function of $\mu$.
If $\mu$ and $\nu$ are two probability distributions such that
$F_\mu(x)\geq F_\nu(x)$ for all $x\in\mathbb R$,
then $\mu$ is said to be \emph{smaller than $\nu$ in the~usual stochastic order} (denoted by $\mu\lst\nu$).
An important characterization of the~usual stochastic order is the following theorem.
\begin{theorem}[\cite{Shaked2007}, p. 5]\label{th:1a1}
Two probability distributions $\mu$ and $\nu$ satisfy $\mu\lst\nu$ if, and only if there exist two random variables $X$ and $Y$
defined on the same probability space, such that $X\sim\mu$, $Y\sim\nu$ and $P(X\leq Y)=1$.
\end{theorem}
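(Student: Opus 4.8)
The plan is to prove the two implications separately, the nontrivial one being the construction of the coupling from the ordering of distribution functions. For the easy (``if'') direction I would start from random variables $X$ and $Y$ on a common probability space with $X\sim\mu$, $Y\sim\nu$ and $P(X\le Y)=1$. Then for every $x\in\mathbb R$ the almost-sure inequality $X\le Y$ makes the event $\{Y\le x\}$ contained, up to a null set, in $\{X\le x\}$, so $F_\nu(x)=P(Y\le x)\le P(X\le x)=F_\mu(x)$. Since this holds for all $x$, we conclude $\mu\lst\nu$.

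For the converse I would use the quantile (generalized inverse) coupling. Assuming $F_\mu(x)\ge F_\nu(x)$ for all $x$, I define for each distribution the generalized inverse
\[
F_\mu^{-1}(u)=\inf\{x\in\mathbb R:F_\mu(x)\ge u\},\qquad u\in(0,1),
\]
and analogously $F_\nu^{-1}$. I then take a single random variable $U$ uniformly distributed on $(0,1)$ on some probability space and set $X=F_\mu^{-1}(U)$ and $Y=F_\nu^{-1}(U)$. Two facts must be verified. First, the inverse-transform identity $F_\mu^{-1}(U)\sim\mu$, which rests on the equivalence $F_\mu^{-1}(u)\le x\iff u\le F_\mu(x)$; this equivalence follows from the infimum in the definition together with the right-continuity and monotonicity of $F_\mu$, and it gives $P(X\le x)=P(U\le F_\mu(x))=F_\mu(x)$, so $X\sim\mu$, and likewise $Y\sim\nu$.

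The crux is the second fact, the order-reversal of inverses. From $F_\mu\ge F_\nu$ pointwise I would deduce $F_\mu^{-1}(u)\le F_\nu^{-1}(u)$ for every $u\in(0,1)$ as follows: if $F_\nu(x)\ge u$ then $F_\mu(x)\ge F_\nu(x)\ge u$, so $\{x:F_\nu(x)\ge u\}\subseteq\{x:F_\mu(x)\ge u\}$, and passing to infima over the larger set can only decrease the value, giving $F_\mu^{-1}(u)\le F_\nu^{-1}(u)$. Consequently $X=F_\mu^{-1}(U)\le F_\nu^{-1}(U)=Y$ holds surely, whence $P(X\le Y)=1$, and together with the distributional identities this exhibits the required coupling.

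I expect the only genuinely delicate point to be the careful handling of the generalized inverse in the first fact: one must justify the equivalence $\{F_\mu^{-1}(u)\le x\}=\{u\le F_\mu(x)\}$ using right-continuity of $F_\mu$ and the infimum in the definition, after which the uniform law of $U$ transfers directly to the law $\mu$ of $X$. The monotonicity used in the order-reversal and the easy direction are then essentially immediate.
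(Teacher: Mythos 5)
Your proof is correct and complete: both the easy direction and the quantile (inverse-transform) coupling, including the Galois equivalence $F_\mu^{-1}(u)\le x\iff u\le F_\mu(x)$ and the order reversal of generalized inverses, are handled properly. Note that the paper itself offers no proof of this statement --- it is quoted directly from the reference [Shaked--Shanthikumar, p.~5] --- and your argument is precisely the standard quantile-coupling proof given there, so your approach coincides with the cited one.
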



If $\mu$ and $\nu$ are two two probability distributions such that
$$\int\varphi(x)\mu(dx)\leq\int\varphi(x)\nu(dx) \quad \text{for all convex functions }\ \varphi\colon\mathbb R\to\mathbb R,$$
provided the integrals exist, then $\mu$ is said to be \emph{smaller than $\nu$ in the convex stochastic order} (denoted as $\mu\lcx\nu$).

We will need the following special case of
the Hardy-Littlewood-P\'olya inequality (\cite{Hardy1952}, Theorem 108):
\begin{remark}\label{th:HLP}
Let $E\subset\mathbb R$ be a convex subset of the real line and let $\varphi:E\to\mathbb R$ be a convex function.
If $a\leq b,c\leq d$ are in $E$ and $a+d=b+c$, then $\varphi(b)+\varphi(c)\leq\varphi(a)+\varphi(d)$.
\end{remark}

In the following theorem, we give a very useful sufficient condition that will be used for the verification of some convex stochastic orders.
\begin{theorem}\label{th:condition}
Let $\mu$ and $\nu$ be two probability distributions with finite first moments, such that $\mu\lst\nu$ or $\nu\lst\mu$. Then
\begin{equation}\label{eq:main}
\mu*\nu\lcx\tfrac12(\mu*\mu+\nu*\nu).
\end{equation}
\end{theorem}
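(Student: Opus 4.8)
The plan is to reduce the convex-order inequality \eqref{eq:main} to a single pointwise application of the Hardy-Littlewood-P\'olya inequality (Remark~\ref{th:HLP}), carried out after passing to a carefully chosen coupling. Since the right-hand side of \eqref{eq:main} is symmetric in $\mu$ and $\nu$ and $\mu*\nu=\nu*\mu$, I may assume without loss of generality that $\mu\lst\nu$. By Theorem~\ref{th:1a1} there then exist random variables $X\sim\mu$ and $Y\sim\nu$ on a common probability space with $P(X\le Y)=1$. First I would take two independent copies $(X_1,Y_1)$ and $(X_2,Y_2)$ of the coupled pair $(X,Y)$, so that $X_1,X_2\sim\mu$, $Y_1,Y_2\sim\nu$, each pair satisfies $X_k\le Y_k$ almost surely, and the two pairs are independent of one another.

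The key observation is that on every sample point the four sums
\[
X_1+X_2,\qquad X_1+Y_2,\qquad X_2+Y_1,\qquad Y_1+Y_2
\]
are so ordered that $X_1+X_2$ is the smallest and $Y_1+Y_2$ is the largest (using $X_1\le Y_1$ and $X_2\le Y_2$), while the two mixed sums lie in between; moreover $(X_1+X_2)+(Y_1+Y_2)=(X_1+Y_2)+(X_2+Y_1)$. This is precisely the configuration $a\le b,c\le d$ with $a+d=b+c$ required by Remark~\ref{th:HLP}. Applying that remark to an arbitrary convex $\varphi$ yields, pointwise,
\[
\varphi(X_1+Y_2)+\varphi(X_2+Y_1)\le\varphi(X_1+X_2)+\varphi(Y_1+Y_2).
\]

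It then remains to take expectations and identify the laws of the four sums. By the independence of the two pairs, $X_1+Y_2$ and $X_2+Y_1$ are each distributed as $\mu*\nu$, whereas $X_1+X_2\sim\mu*\mu$ and $Y_1+Y_2\sim\nu*\nu$; integrating the pointwise inequality therefore gives $2\int\varphi\,d(\mu*\nu)\le\int\varphi\,d(\mu*\mu)+\int\varphi\,d(\nu*\nu)$, which is exactly \eqref{eq:main}.

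I expect the substance of the argument to be the HLP step above; the remaining points are bookkeeping rather than genuine obstacles. One must set up the independence so that the \emph{crossed} sums truly have law $\mu*\nu$ — this is the reason for coupling once and then taking independent copies of the coupled pair, instead of coupling twice independently. One must also confirm that all integrals are well defined, and this is where the finite-first-moment hypothesis enters: a convex $\varphi$ is bounded below by an affine function, and the convolutions inherit finite first moments, so each integral exists in $(-\infty,+\infty]$ and the inequality is meaningful.
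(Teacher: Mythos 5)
Your proposal is correct and follows essentially the same route as the paper's own proof: couple $\mu$ and $\nu$ via Theorem~\ref{th:1a1}, take two independent copies $(X_1,Y_1)$ and $(X_2,Y_2)$ of the coupled pair, apply Remark~\ref{th:HLP} pointwise to the four sums, and integrate. Your additional remark on integrability (convex $\varphi$ bounded below by an affine function, finite first moments of the convolutions) is a sound piece of bookkeeping that the paper leaves implicit here and only spells out later, in the proof of Lemma~\ref{lm:3.12}.
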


\begin{proof}
Without loss of generality we may assume that $\mu\lst\nu$.
Then, by Theorem \ref{th:1a1}, there exist two independent random vectors $(X_1,Y_1)$ and $(X_2,Y_2)$ such that 
\begin{equation}\label{eq:suf3}
X_1,X_2\sim\mu, \quad Y_1,Y_2\sim\nu, \quad P(X_1\leq Y_1)=1 \quad \text{and} \quad P(X_2\leq Y_2)=1.
\end{equation}
Then we have $X_1+Y_2\sim\mu*\nu$, $X_2+Y_1\sim\mu*\nu$, $X_1+X_2\sim\mu*\mu$
and $Y_1+Y_2\sim\nu*\nu$.
By \eqref{eq:suf3}, $P(X_1+X_2\leq X_1+Y_2\leq Y_1+Y_2)=1$, $P(X_1+X_2\leq X_2+Y_1\leq Y_1+Y_2)=1,$
and obviously $P((X_1+Y_2)+(X_2+Y_1)=(X_1+X_2)+ (Y_1+Y_2))=1$.
By Remark \ref{th:HLP}, we conclude that
$P(\varphi(X_1+Y_2)+\varphi(X_2+Y_1)\leq \varphi(X_1+X_2)+ \varphi(Y_1+Y_2))=1$
for all convex functions $\varphi\colon\mathbb R\to\mathbb R$, which implies
\begin{multline*}
\int\varphi(x)(\mu*\nu)(dx)=\tfrac12\E\left((\varphi(X_1+Y_2)+\varphi(X_2+Y_1)\right)\leq\\\\
\tfrac12\E\left(\varphi(X_1+X_2)+\varphi(Y_1+Y_2))\right)
=\tfrac12\left(\int\varphi(x)(\mu*\mu)(dx)+\int\varphi(x)(\nu*\nu)(dx)\right).
\end{multline*}
Thus $\mu*\nu\lcx\tfrac12(\mu*\mu+\nu*\nu)$.
\end{proof}


The following example shows that the converse of Theorem \ref{th:condition} is not true.

\begin{example}
If $\mu=\frac12\delta_{-3}+\frac12\delta_1$ and $\nu=\frac34\delta_0+\frac14\delta_4$, then \eqref{eq:main} holds, although neither the condition $\mu\lst\nu$ nor $\nu\lst\mu$ is satisfied (we leave the proof to the reader).
\end{example}

We will apply Theorem~\ref{th:condition} for $\mu$ and $\nu$ from various families of probability distributions.
As a~result we obtain new proofs of the results of J.\ Mrowiec, T.\ Rajba and S.\ Wąsowicz \cite{MRW2017} (Theorem \ref{th:1} and \cite{MRW2017}),
U.\ Abel (\cite{Abel2016}) and several new inequalities, which are analogues of \eqref{eq:Rasa}.

The binomial distribution with parameters $n\in\N$ and $p\in[0,1]$ (denoted by $B(n,p)$)
is the probability distribution given by
$B(n,p)(\{k\})=b_{n,k}(p)=\binom nkp^k(1-p)^{n-k}$ for $k=0,1,\dots,n$
and $B(n,p)(\mathbb R\setminus\{0,1,\dots\,n\})=0$.

The Poisson distribution with the parameter $\lambda>0$ (denoted by $Poiss(\lambda)$)
is the probability distribution given by
$Poiss(\lambda)(\{i\})=s_i(\lambda)=e^{-\lambda}\cdot\tfrac{\lambda^i}{i!}$ for $i=0,1,\dots$
and $Poiss(\lambda)(\mathbb R\setminus\{0,1,\dots\})=0$.
By convention, we say that $Poiss(0)=\delta_0$ (i.e. $s_0(0)=1 $ and $s_i(0)=0$ for $i=1,2,\dots$).

The negative binomial distribution with parameters $r>0$ and $0\leq p<1$ (denoted by $NB(r,p)$)
is the probability distribution given by
$$NB(r,p)(\{k\})=nb_k(r,p)=\binom{k+r-1}kp^k(1-p)^r=\frac{\Gamma(k+r)}{\Gamma(r)\cdot k!}p^k(1-p)^r\quad \text{for}\quad k=0,1,\dots$$
and $NB(r,p)(\mathbb R\setminus\{0,1,\dots\})=0$.
By convention, we say that if $0\leq p<1$, then $NB(0,p)=\delta_0$ (i.e., $nb_0(0,p)=1$ and $nb_k(0,p)=0$ for $k>0$).
The geometric distribution is a special case of the negative binomial distribution, namely $Geom(p)=NB(1,1-p)$.

The gamma distribution with shape $\alpha>0$ and rate $\beta>0$ (denoted by $\Gamma(\alpha,\beta)$)
is the distribution given by the density function
$\gamma_{\alpha,\beta}(x)=\frac{\beta^{\alpha}x^{\alpha-1}e^{-x\beta}}{\Gamma(\alpha)}$ for $x>0$
and $\gamma_{\alpha,\beta}(x)=0$ for $x\leq0$.
By convention, we define $\Gamma(0,\beta)=\delta_0$ for every $\beta>0$.

The beta distribution with parameters $\alpha,\beta>0$ (denoted $Beta(\alpha,\beta)$)
is the distribution given by the density function
$$\overline b_{\alpha,\beta}(x)=\frac{\Gamma(\alpha+\beta)}{\Gamma(\alpha)\Gamma(\beta)}x^{\alpha-1}(1-x)^{\beta-1} \quad \text{for }x\in(0,1),$$
and $\overline b_{\alpha,\beta}(x)=0$ for $x\notin(0,1)$.
By convention, we define $Beta(0,\beta)=\delta_0$ and $Beta(\alpha,0)=\delta_1$ for every $\alpha,\beta>0$.

By $f_{m,\sigma^2}$ we denote the density function of $N(m,\sigma^2)$, the normal (Gaussian) distribution
with the mean $m\in\mathbb R$ and the variance $\sigma^2>0$.

\begin{lemma}\label{th:rozklady}
\begin{itemize}
\item[{\rm\textbf{a)}}]
Let $n\in\mathbb N$ and $p_1,p_2\in[0,1]$. Then
$B(n,p_1)\lst B(n,p_2)\ \Leftrightarrow\ p_1\leq p_2$.
\item[{\rm\textbf{b)}}]
Let $\lambda_1,\lambda_2\geq 0$. Then
$Poiss(\lambda_1)\lst Poiss(\lambda_2)\ \Leftrightarrow\ \lambda_1\leq\lambda_2$.
\item[{\rm\textbf{c)}}]
Let $r_1,r_2\geq0$ and $p_1,p_2\in[0,1)$.
If $r_1\leq r_2$ and $p_1\leq p_2$, then $NB(r_1,p_1)\lst NB(r_2,p_2)$.
\item[{\rm\textbf{d)}}]
Let $\alpha_1,\alpha_2\geq0$ and $\beta_1,\beta_2>0$.
If $\alpha_1\leq\alpha_2$ and $\beta_1\geq\beta_2$, then $\Gamma(\alpha_1,\beta_1)\lst\Gamma(\alpha_2,\beta_2)$.
\item[{\rm\textbf{e)}}]
Let $\alpha_1,\beta_1,\alpha_2,\beta_2\geq0$ be such that $\alpha_1+\beta_1>0$ and $\alpha_2+\beta_2>0$.
If $\alpha_1\leq\alpha_2$ and $\beta_1\geq\beta_2$, then $Beta(\alpha_1,\beta_1)\lst Beta(\alpha_2,\beta_2)$.
\item[{\rm\textbf{f)}}]
Let $m_1,m_2\in\mathbb R$ and $\sigma_1^2,\sigma_2^2>0$. Then
$N(m_1,\sigma_1^2)\lst N(m_2,\sigma_2^2)$ $\Leftrightarrow$ $(m_1\leq m_2$ and $\sigma_1^2=\sigma_2^2)$.
\end{itemize}
\end{lemma}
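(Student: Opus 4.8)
The plan is to prove every item by producing an explicit coupling and invoking the characterization of the usual stochastic order from Theorem~\ref{th:1a1}, supplemented by two elementary principles. First, if $Y=X+Z$ with $Z\ge 0$ almost surely then $X\le Y$, so adding an independent nonnegative summand can only increase a distribution in $\lst$; in particular $\mu\lst\mu*\rho$ whenever $\rho$ is supported on $[0,\infty)$. Second, each family is \emph{additive} or \emph{scalable} in one of its parameters, which lets me move one parameter at a time and close by transitivity of $\lst$. For the necessary (``$\Rightarrow$'') directions in a), b) and f), I use that $\mu\lst\nu$ forces $\int x\,\mu(dx)\le\int x\,\nu(dx)$ (couple $X\le Y$ and take expectations), so the order pins down the parameter that governs the mean.

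For the discrete families I argue as follows. In a) I couple $B(n,p_1)$ and $B(n,p_2)$ on a common vector $U_1,\dots,U_n$ of i.i.d.\ uniforms, setting $X=\sum_i\mathbf 1_{\{U_i\le p_1\}}\le\sum_i\mathbf 1_{\{U_i\le p_2\}}=Y$ when $p_1\le p_2$; the converse follows since the mean of $B(n,p)$ is $np$. In b), infinite divisibility gives $Poiss(\lambda_2)=Poiss(\lambda_1)*Poiss(\lambda_2-\lambda_1)$, so the nonnegative-summand principle yields $\lst$, and the converse uses that the mean of $Poiss(\lambda)$ is $\lambda$. In c) I split the order into two moves: the $r$-direction is additive, $NB(r_2,p)=NB(r_1,p)*NB(r_2-r_1,p)$, hence $NB(r_1,p)\lst NB(r_2,p)$; for the $p$-direction at fixed $r$ I compare tail sums, which reduces to the likelihood ratio $nb_k(r,p_2)/nb_k(r,p_1)\propto(p_2/p_1)^k$ being nondecreasing in $k$ when $p_1\le p_2$ (alternatively, one may represent $NB(r,p)$ as a $\Gamma(r,(1-p)/p)$-mixed Poisson and invoke part d)).

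For the continuous families, in d) the identity $\Gamma(\alpha_2,\beta)=\Gamma(\alpha_1,\beta)*\Gamma(\alpha_2-\alpha_1,\beta)$ handles the shape, while scaling handles the rate: if $X\sim\Gamma(\alpha,\beta_1)$ then $(\beta_1/\beta_2)X\sim\Gamma(\alpha,\beta_2)$, and $\beta_1\ge\beta_2$ gives $(\beta_1/\beta_2)X\ge X$ almost surely. Item f) is a direct computation with cumulative distribution functions: requiring $\Phi\bigl((x-m_1)/\sigma_1\bigr)\ge\Phi\bigl((x-m_2)/\sigma_2\bigr)$ for all $x$ is equivalent, by monotonicity of $\Phi$, to the affine inequality $(\sigma_2-\sigma_1)x\ge\sigma_2 m_1-\sigma_1 m_2$ holding for every $x\in\mathbb R$, which forces the $x$-coefficient to vanish, i.e.\ $\sigma_1=\sigma_2$, and then leaves $m_1\le m_2$; reading the chain backwards gives sufficiency.

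The step I expect to require the most care is e), since the beta family is neither additive nor closed under scaling. Here the plan is to exploit the gamma representation: if $G\sim\Gamma(a,1)$ and $H\sim\Gamma(b,1)$ are independent, then $G/(G+H)\sim Beta(a,b)$. Taking independent gammas $G_1\sim\Gamma(\alpha_1,1)$, $G_1'\sim\Gamma(\alpha_2-\alpha_1,1)$, $H_2\sim\Gamma(\beta_2,1)$, $H_2'\sim\Gamma(\beta_1-\beta_2,1)$ (all shapes nonnegative by hypothesis), I set
\[
X=\frac{G_1}{G_1+H_2+H_2'}\sim Beta(\alpha_1,\beta_1),\qquad
Y=\frac{G_1+G_1'}{G_1+G_1'+H_2}\sim Beta(\alpha_2,\beta_2).
\]
Since $u\mapsto u/(u+v)$ is increasing in $u$ and decreasing in $v$, enlarging the numerator from $G_1$ to $G_1+G_1'$ and shrinking the complementary mass from $H_2+H_2'$ to $H_2$ both push the ratio upward, so $X\le Y$ almost surely and Theorem~\ref{th:1a1} concludes. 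The boundary cases in which a parameter equals $0$ are absorbed by the stated conventions $\Gamma(0,\beta)=\delta_0$, $Beta(0,\beta)=\delta_0$ and $Beta(\alpha,0)=\delta_1$, under which the degenerate gamma factors collapse the ratios to $0$ or $1$ exactly as required; the hypotheses $\alpha_i+\beta_i>0$ guarantee the denominators are a.s.\ positive.
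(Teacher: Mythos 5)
Your proposal is correct, and for items b), d) and e) it is essentially the paper's own proof: the same additivity/scaling couplings for the Poisson and gamma families, and for the beta family literally the same four-gamma construction (your $G_1,G_1',H_2,H_2'$ are the paper's $U,V,W,Z$). The differences lie elsewhere. For a) and f) the paper simply cites Shaked and Shanthikumar (p.~14), whereas you supply self-contained arguments --- the common-uniforms coupling for the binomial and the CDF computation forcing $\sigma_1=\sigma_2$ for the Gaussian; both are standard and correct. The genuine divergence is c): the paper builds a single coupling via subordination, representing $NB(r,p)$ as $N_{\frac{p}{1-p}\cdot T}$ where $(N_t)_{t\geq0}$ is a unit-rate Poisson process and $T\sim\Gamma(r,1)$, so that monotonicity of the process in $t$ handles the $p$-parameter while an added independent $NB(r_2-r_1,p_2)$ summand handles the $r$-parameter, all inside one application of Theorem~\ref{th:1a1}. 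You instead factor the comparison through $NB(r_2,p_1)$: additivity of the family in $r$ gives the first step, a monotone likelihood ratio (single-crossing) argument gives the second, and you close by transitivity of $\lst$. Your route is more elementary in that it avoids the Poisson process, but it imports the standard fact that a nondecreasing likelihood ratio implies the usual stochastic order --- the one ingredient not otherwise present in the paper; your parenthetical alternative (the $\Gamma$-mixed-Poisson representation combined with d)) is in substance exactly the paper's subordination argument. Either way the lemma is fully proved.
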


\begin{proof}
For \textbf{a)} and \textbf{f)} see \cite{Shaked2007}, p. 14

\textbf{b)} It is enough to show $\lambda_1\leq\lambda_2\Rightarrow Poiss(\lambda_1)\lst Poiss(\lambda_2)$.
Assume that $0\leq\lambda_1\leq\lambda_2$. Let $X\sim Poiss(\lambda_1)$ and $Z\sim Poiss(\lambda_2-\lambda_1)$
be independent random variables. Then $Y:=X+Z$ satisfies $Y\sim Poiss(\lambda_2)$ and $P(X\leq Y)=P(Z\geq0)=1$.
Hence Theorem \ref{th:1a1} yields $Poiss(\lambda_1)\lst Poiss(\lambda_2)$.

\textbf{c)} We shall use the following observation: Let $r>0$ and $p\in[0,1)$. If $(N_t)_{t\geq0}$ is the Poisson process with intensity $\lambda=1$
and $T\sim\Gamma(r,1)$ is independent of $(N_t)_{t\geq0}$, then $N_{\frac p{1-p}\cdot T}\sim NB(r,p)$.
Indeed, for $k=0,1,\dots$ we have
\begin{multline*}
 P(N_{\frac p{1-p}\cdot T}=k)=\int_0^\infty\frac{t^{r-1}e^{-t}}{\Gamma(r)}\cdot\frac{\left(\frac p{1-p}\cdot t\right)^k}{k!}e^{-\frac p{1-p}\cdot t}dt=\\\\
 \frac{\Gamma(k+r)}{\Gamma(r)\cdot k!}p^k(1-p)^r\cdot\int_0^\infty\frac{\left(\frac 1{1-p}\right)^{k+r}t^{k+r-1}e^{-\frac 1{1-p}\cdot t}}{\Gamma(k+r)}dt=
 \binom{k+r-1}kp^k(1-p)^r\cdot1.
\end{multline*}
Assume that $0<r_1\leq r_2$ and $0\leq p_1\leq p_2<1$, thus $0\leq\frac{p_1}{1-p_1}\leq\frac{p_2}{1-p_2}$.

Let $(N_t)_{t\geq0}$ (the Poisson process with intensity $\lambda=1$), $T\sim\Gamma(r_1,1)$ and $Z\sim NB(r_2-r_1,p_2)$ be independent.
We set $X:=N_{\frac{p_1}{1-p_1}\cdot T}$ and $Y:=N_{\frac{p_2}{1-p_2}\cdot T}+Z.$
Then $X\sim NB(r_1,p_1)$, $Y\sim NB(r_2,p_2)$ and
$$P(X\leq Y)=P\left(\tfrac{p_1}{1-p_1}\cdot T\leq\tfrac{p_2}{1-p_2}\cdot T\text{ and }Z\geq0\right)=1.$$
Hence Theorem \ref{th:1a1} yields $NB(r_1,p_1)\lst NB(r_2,p_2)$.

\textbf{d)} Assume that $0\leq\alpha_1\leq\alpha_2$ and $\beta_1\geq\beta_2>0$, thus $\frac{\beta_1}{\beta_2}\geq1$.
Let $X\sim\Gamma(\alpha_1,\beta_1)$ and $Z\sim\Gamma(\alpha_2-\alpha_1,\beta_1)$
be independent random variables. Then $X+Z\sim\Gamma(\alpha_2,\beta_1)$ and $Y:=\frac{\beta_1}{\beta_2}\cdot(X+Z)$
satisfies $Y\sim\Gamma(\alpha_2,\beta_2)$.
Moreover, $P(X\leq Y)\geq P(Z\geq0$ and $X+Z\geq0)=1$. Hence Theorem \ref{th:1a1} yields $\Gamma(\alpha_1,\beta_1)\lst\Gamma(\alpha_2,\beta_2)$.

\textbf{e)} Let $U\sim\Gamma(\alpha_1,1)$, $V\sim\Gamma(\alpha_2-\alpha_1,1)$, $W\sim\Gamma(\beta_2,1)$ and $Z\sim\Gamma(\beta_1-\beta_2,1)$
be independent random variables.
We set $X:=\frac{U}{U+W+Z}$ and $Y:=\frac{U+V}{U+V+W}$.
Then $X\sim Beta(\alpha_1,\beta_1)$, $Y\sim Beta(\alpha_2,\beta_2)$ and $P(X\leq Y)=1$.
Theorem \ref{th:1a1} yields $Beta(\alpha_1,\beta_1)\lst Beta(\alpha_2,\beta_2)$.
\end{proof}


By Lemma \ref{th:rozklady}, as an immediate consequence of Theorem \ref{th:condition}, we obtain the following I.\ Ra\c{s}a type inequalities:

\begin{theorem}\label{th:Rasa}
\begin{itemize}
\item[{\rm\textbf{a)}}]
If $n\in\N$ and $x,y\in[0,1]$, then \eqref{eq:Rasa} holds for all convex functions $\varphi\in\C([0,1])$.
\item[{\rm\textbf{b)}}]
If $x,y\geq0$, then \eqref{eq:Rasabis} holds for all convex functions $\varphi\colon[0,\infty)\to\mathbb R$.
\item[{\rm\textbf{c)}}]
Let $r_1,r_2>0$ and $p_1,p_2\in[0,1)$. If $(r_1-r_2)(p_1-p_2)\geq0$, then
{\small\begin{equation}\label{eq:NB6}
\sum_{i=0}^\infty\sum_{j=0}^\infty\left[nb_i(r_1,p_1)\cdot nb_j(r_1,p_1)+
nb_i(r_2,p_2)\cdot nb_j(r_2,p_2)-
2\cdot nb_i(r_1,p_1)\cdot nb_j(r_2,p_2)\right]\cdot\varphi\left(\tfrac{i+j}2\right)\geq0
\end{equation}}
for all convex functions $\varphi\colon[0,\infty)\to\mathbb R$.
\item[{\rm\textbf{d)}}]
Let $\alpha_1,\alpha_2>0$ and $\beta_1,\beta_2>0$ satisfy $(\alpha_1-\alpha_2)(\beta_1-\beta_2)\leq0$. Then
{\small\begin{equation}\label{eq:gam12}
\int_0^{\infty}\int_0^{\infty}\left[\gamma_{\alpha_1,\beta_1}(u)\cdot\gamma_{\alpha_1,\beta_1}(v)+
\gamma_{\alpha_2,\beta_2}(u)\cdot\gamma_{\alpha_2,\beta_2}(v)
-2\cdot\gamma_{\alpha_1,\beta_1}(u)\cdot\gamma_{\alpha_2,\beta_2}(v)\right]\cdot\varphi\left(\tfrac{u+v}2\right)du\ dv\geq0
\end{equation}}
for all convex functions $\varphi\colon[0,\infty)\to\mathbb R$.
\item[{\rm\textbf{e)}}]
Let $\alpha_1,\beta_1,\alpha_2,\beta_2>0$ satisfy $(\alpha_1-\alpha_2)(\beta_1-\beta_2)\leq0$. Then
{\small\begin{equation}\label{eq:bet12}
\int_0^1\int_0^1\left[\overline b_{\alpha_1,\beta_1}(u)\cdot\overline b_{\alpha_1,\beta_1}(v)+
\overline b_{\alpha_2,\beta_2}(u)\cdot\overline b_{\alpha_2,\beta_2}(v)-
2\cdot\overline b_{\alpha_1,\beta_1}(u)\cdot\overline b_{\alpha_2,\beta_2}(v)\right]\cdot\varphi\left(\tfrac{u+v}2\right)du\ dv\geq0
\end{equation}}
for all convex functions $\varphi\colon[0,1]\to\mathbb R$.
\item[{\rm\textbf{f)}}]
If $\sigma^2>0$ and $m_1,m_2\in\mathbb R$, then
{\small$$\int_{-\infty}^\infty\int_{-\infty}^\infty\left[f_{m_1,\sigma^2}(u)\cdot f_{m_1,\sigma^2}(v)+f_{m_2,\sigma^2}(u)\cdot f_{m_2,\sigma^2}(v)-
2\cdot f_{m_1,\sigma^2}(u)\cdot f_{m_2,\sigma^2}(v)\right]\cdot\varphi\left(\tfrac{u+v}2\right)du\ dv\geq0$$}
for all convex functions $\varphi\colon\mathbb R\to\mathbb R$.
\end{itemize}
\end{theorem}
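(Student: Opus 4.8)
The plan is to read each of parts a)--f) as a single instance of the convex order relation \eqref{eq:main} furnished by Theorem~\ref{th:condition}, applied to the two distributions whose basic functions appear in the corresponding bracketed expression. Concretely, in each part I would take $\mu$ and $\nu$ to be the two members of the relevant family: $\mu=B(n,x),\ \nu=B(n,y)$ in a); $\mu=Poiss(x),\ \nu=Poiss(y)$ in b); $\mu=NB(r_1,p_1),\ \nu=NB(r_2,p_2)$ in c); $\mu=\Gamma(\alpha_1,\beta_1),\ \nu=\Gamma(\alpha_2,\beta_2)$ in d); $\mu=Beta(\alpha_1,\beta_1),\ \nu=Beta(\alpha_2,\beta_2)$ in e); and $\mu=N(m_1,\sigma^2),\ \nu=N(m_2,\sigma^2)$ in f). Each of these distributions has a finite first moment, so the integrability hypothesis of Theorem~\ref{th:condition} is met in every case.

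The key observation is that the three convolutions $\mu*\mu$, $\nu*\nu$ and $\mu*\nu$ are exactly the laws of the sums weighted by the products of basic functions that occur in the brackets. For instance, in a) one has $\int\psi\,d(\mu*\mu)=\sum_{i,j}b_{n,i}(x)b_{n,j}(x)\psi(i+j)$, and similarly for the other two convolutions; in d) Fubini's theorem gives $\int\psi\,d(\mu*\mu)=\int_0^\infty\!\int_0^\infty\gamma_{\alpha_1,\beta_1}(u)\gamma_{\alpha_1,\beta_1}(v)\psi(u+v)\,du\,dv$, and so on. I would then feed Theorem~\ref{th:condition} a rescaled test function, namely $\psi(t)=\varphi\!\left(\tfrac t{2n}\right)$ in a) and b), and $\psi(t)=\varphi\!\left(\tfrac t2\right)$ in c)--f). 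Since composition with an affine map preserves convexity, $\psi$ is convex whenever $\varphi$ is, and it is defined on the range of the relevant sum (e.g.\ $[0,2]$ in the beta case, so that $\varphi$ need only be convex on $[0,1]$, matching the statement). Writing \eqref{eq:main} out for this $\psi$ and multiplying through by $2$ reproduces verbatim the inequality claimed in the corresponding part, the factor $-2$ in each bracket arising precisely from the term $2\int\psi\,d(\mu*\nu)$.

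It then remains only to verify the comparability hypothesis $\mu\lst\nu$ or $\nu\lst\mu$ of Theorem~\ref{th:condition}, and this is exactly what Lemma~\ref{th:rozklady} delivers. In a) either $x\le y$ or $y\le x$, so part a) of the lemma applies; in b) the two Poisson parameters are always comparable by part b). In c)--e) the sign conditions are engineered to match the one-directional implications of the lemma: for example $(r_1-r_2)(p_1-p_2)\ge0$ splits into the case $r_1\le r_2,\ p_1\le p_2$, yielding $\mu\lst\nu$, and the case $r_1\ge r_2,\ p_1\ge p_2$, yielding $\nu\lst\mu$, both covered by Lemma~\ref{th:rozklady}c); the conditions $(\alpha_1-\alpha_2)(\beta_1-\beta_2)\le0$ in d) and e) play the same role relative to parts d) and e). In f) the shared variance $\sigma^2$ makes $N(m_1,\sigma^2)$ and $N(m_2,\sigma^2)$ comparable by part f) regardless of how $m_1$ and $m_2$ are ordered.

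The substance of the result lies entirely in Theorem~\ref{th:condition} and Lemma~\ref{th:rozklady}, so I do not anticipate a genuine obstacle here; the proof is a verification. The only points requiring care are the bookkeeping that matches the product-sign conditions in c)--e) to the one-directional comparability implications of the lemma---recalling that, unlike a), b) and f), those parts are stated as implications rather than equivalences, so one must cover both orderings explicitly---and checking that the affine rescaling lands the argument of $\varphi$ in the domain on which convexity is assumed.
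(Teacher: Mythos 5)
Your proposal is correct and follows essentially the same route as the paper: identify $\mu$ and $\nu$ as the two distributions from the relevant family, invoke Lemma~\ref{th:rozklady} to get comparability in the usual stochastic order (with the sign conditions in c)--e) handled by symmetry of the two cases), apply Theorem~\ref{th:condition}, and rewrite the resulting convex order relation with an affinely rescaled convex test function to recover the stated inequality. The paper carries this out explicitly only for part a) and notes the others are analogous, which is exactly the verification you describe.
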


\begin{proof}
The proofs of a)--f) are very similar. We present the proof of a).
Let $x,y\in[0,1]$, $\mu=B(n,x)$ and $\nu=B(n,y)$. By Lemma \ref{th:rozklady}a,
$\mu\lst\nu$ or $\nu\lst\mu$.
By Theorem \ref{th:condition} we obtain $\mu*\nu\lcx\tfrac12(\mu*\mu+\nu*\nu)$.
It follows that for all convex functions $\varphi\in\C\left([0,1]\right)$ we have
\begin{multline*}
2\iint\varphi\left(\tfrac{u+w}{2n}\right)\mu(du)\ \nu(dw)=
2\int\varphi\left(\tfrac z{2n}\right)(\mu*\nu)(dz)\leq\\\\
\int\varphi\left(\tfrac z{2n}\right)(\mu*\mu)(dz)+\int\varphi\left(\tfrac z{2n}\right)(\nu*\nu)(dz)=
\iint\varphi\left(\tfrac{u+w}{2n}\right)\mu(du)\ \mu(dw)+\iint\varphi\left(\tfrac{u+w}{2n}\right)\nu(du)\ \nu(dw),
\end{multline*}
which can be rewritten in the form \eqref{eq:Rasa}. The theorem is proved.
\end{proof}


Theorem~\ref{th:Rasa}a is the original I.\ Ra\c sa conjecture (cf. Theorem \ref{th:1}) proved in \cite{MRW2017} .
Theorem~\ref{th:Rasa}b and the special case of Theorem~\ref{th:Rasa}c (when $r$ is a natural number) are proved in \cite{Abel2016}.
U.\ Abel \cite{Abel2016} gave elementary proofs of these inequalities, but the new proof given here is significantly simpler and shorter.
Applying \eqref{eq:NB6} with $r_1=r_2=r$, $p_1=\frac x{1+x}$ and $p_2=\frac y{1+y}$ (with $x,y\geq0$)
results in the inequality \eqref{eq:baskRasa}, which is associated with the Baskakov operator $V_r$.

Note that $r_1=r_2=1$ in Theorem~\ref{th:Rasa}c corresponds to the case of the geometric probability distribution.
Similarly, $\alpha_1=\alpha_2=1$ in Theorem~\ref{th:Rasa}d corresponds to the case of the exponential probability distribution.


In Theorem~\ref{th:Rasa}e it is worth to consider the reparametrization
$\alpha=xt$ and $\beta=(1-x)t$, where $t>0$ is fixed and $x\in[0,1]$.
The obtained I.\ Ra\c sa type inequality is related to the family of beta operators
$\mathcal B_t:\C([0,1])\to\C([0,1])$ defined by $\mathcal B_t(\varphi)(x)=\int_0^1\varphi(u)\overline b_{xt,(1-x)t}(u)du$.
Similar reparametrizations in Theorem~\ref{th:Rasa}a--f lead to I.\ Ra\c sa type inequalities related to the other families of operators:
Bleimann--Butzer--Hahn operators $L_n$, gamma operators $G_t$, M\"uller gamma operators $G_t^*$, Lupa\c s beta operators $\mathcal B_t^*$,
inverse beta operators $T_t$, Meyer-K\"onig--Zeller operators $M_t$. We recall that inequalities \eqref{eq:Rasa}, \eqref{eq:Rasabis} and \eqref{eq:baskRasa}
are related to Bernstein operators $B_n$, Mirakyan--Sz\'asz operators $S_t$ and Baskakov operators $V_t$, respectively.
For the definitions and properties of the above operators see e.g \cite{Adell1993} or \cite{Adell1996}.

All these operators are Markov operators
(we recall that $T:\C(K)\to\C(K)$ is a Markov operator if it is positive and $T(\varphi)=\varphi$, whenever $\varphi$ is constant).
Some of them ($B_n$, $G_t$, $S_t$, $\mathcal B_t$, $V_t$ and $M_t$) share the additional property: if $T$ is any of these operators,
then $T(\varphi)=\varphi$ for affine $\varphi$.
Consequently, they fit into the theory studied in \cite{Rasa2016, Rasa2017} and they form a new example of operators
satisfying some conditions discussed in \cite{Rasa2016, Rasa2017} (see \cite{Rasa2016}, Example 1.1 and conditions $(c_1)$ and $(c_2)$).
In view of \cite{Rasa2016, Rasa2017} Bernstein operators $B_n$ and beta operators $\mathcal B_t$
are especially interesting, because they share one more property:
they are the endomorphisms of $\C([0,1])$ and $[0,1]$ is compact.
It is easy to construct many other operators with these properties.

\begin{remark}\label{th:remark}
Let $(Y_t)_{t\in\mathbb R}$ be any weakly continuous stochastic process,
with positive increments ($P(Y_s\leq Y_t)=1$ and $P(Y_s<Y_t)>0$ whenever $s<t$) and such that
$\lim_{t\to-\infty}Y_t=-\infty$ (weakly) and $\lim_{t\to\infty}Y_t=\infty$ (weakly).
As an example one may consider the process given by $Y_t=Y+t\cdot Z$,
where $Y$ and $Z$ are any random variables satisfying $P(Z\geq0)=1$ and $P(Z>0)>0$.
Now, let $f$ be an increasing homeomorphism from $\mathbb R$ onto $(0,1)$
and let $g:\mathbb R\to(0,1)$ be the function given by $g(t)=\E f(Y_t)$.
Then $g$ is also an increasing homeomorphism from $\mathbb R$ onto $(0,1)$.
We define the process $(X_x)_{x\in[0,1]}$ as follows:
$X_0=0$, $X_1=1$ and $X_x=f(Y_{g^{-1}(x)})$ for $x\in(0,1)$.
For $x\in[0,1]$ let $\mu_x$ be the distribution of the random variable $X_x$.
The process $(X_x)_{x\in[0,1]}$ is weakly continuous,
it has positive increments (in particular $\mu_x\lst \mu_y$ for every $0\leq x\leq y\leq1$)
and $\E X_x=x$ for every $x\in[0,1]$.
We define the operator $T:\C([0,1])\to\C([0,1])$ by
$T(\varphi)(x)=\E\varphi(X_x)=\int_0^1\varphi(u)\mu_x(du)$.
Then $T$ is a Markov operator satisfying $T(\varphi)=\varphi$ for affine $\varphi$.
Indeed, $T(\varphi)(x)=\E\varphi(X_x)=\varphi(\E X_x)=\varphi(x)$ for $x\in[0,1]$.
Moreover, by Theorem \ref{th:condition}, the operator $T$ satisfies the following I.\ Ra\c{s}a type inequality:
$$2\int_0^1\int_0^1\varphi\left(\tfrac{u+v}2\right)\mu_x(du)\mu_y(dv)
\leq\int_0^1\int_0^1\varphi\left(\tfrac{u+v}2\right)\mu_x(du)\mu_x(dv)
+\int_0^1\int_0^1\varphi\left(\tfrac{u+v}2\right)\mu_y(du)\mu_y(dv)$$
for every convex function $\varphi\in\C([0,1])$ and $x,y\in[0,1]$.
\end{remark}


\section{The Muirhead type inequality for convex orders}

Now we are going to give an inequality (Theorem~\ref{th:main}), which is a generalization of Theorem~\ref{th:condition}.
Before we state the theorem, we need to present two definitions:

\begin{definition}\label{th:definicja}
Let $k\in\mathbb N$ and let $\Pi$ be the set of all permutations of the set $\{1,\dots,k\}$.
We consider the $k$-tuple $(p)=(p_1,\dots,p_k)$ of non-negative integers satisfying $p_1\geq\dots\geq p_k$.
For the given probability distributions $\mu_1,\dots,\mu_k$ and $\pi\in\Pi$, we define $\mu^{(p)}_\pi$ as the following convolution
of probability distributions:
\[\mu^{(p)}_\pi:=(\mu_{\pi(1)})^{*p_1}*(\mu_{\pi(2)})^{*p_2}*\dots*(\mu_{\pi(k)})^{*p_k}\]
We also define
\[\mu^{(p)}:=\tfrac1{k!}\sum_{\pi\in\Pi}\mu^{(p)}_\pi.\]
\end{definition}

Observe, that if we replace $(\mu_1,\dots,\mu_k)$ by any permutation
$(\mu_{\pi(1)},\dots,\mu_{\pi(k)})$, then $\mu^{(p)}$ remains unaltered.
In the set of all the $k$-tuples $(p)$ introduced in Definition \ref{th:definicja}, we consider the following order.
\begin{definition}
We say that $(p)\prec(q)$ if $\sum_{l=1}^kp_l=\sum_{l=1}^kq_l$ and $\sum_{l=1}^mp_l\leq\sum_{l=1}^mq_l$ for $m=1,\dots,k$.
\end{definition}
The above order is a special case of majorization, which has been studied in \cite{Hardy1952} (before Theorem 45),
\cite{MarshallOlkin2011}, and many other sources.

The following condition ($S$) plays an important role.
\begin{definition}
We say that a pair $(p)\prec(q)$ satisfies the condition ($S$), if there exist $1\leq l_1<l_2\leq k $ such that $q_{l_1}=p_{l_1}+1$, $q_{l_2}=p_{l_2}-1$ and $q_l\ =p_l$  for $l\notin\{l_1,l_2\}$.
\end{definition}

\begin{lemma}\label{lm:3.11}
If $(p)\prec(q)$, then there exist $(p)=(p^0)\prec(p^1)\prec\dots\prec(p^I)=(q)$
such that $(p^{i-1})\prec(p^i)$ satisfies ($S$) for $i=1,\dots,I$.
\end{lemma}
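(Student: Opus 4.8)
The plan is to prove this via a standard majorization-to-transposition argument, exactly analogous to the classical fact (see Hardy--Littlewood--P\'olya, before Theorem 45) that if $(p)\prec(q)$ then $(q)$ can be obtained from $(p)$ by a finite sequence of elementary ``Robin Hood'' transfers, each of which moves one unit from a larger coordinate to a smaller one while keeping the tuple sorted. The condition ($S$) is precisely the statement that a single such unit transfer has occurred between two positions $l_1<l_2$ (with $q_{l_1}=p_{l_1}+1$ and $q_{l_2}=p_{l_2}-1$), so the lemma is the assertion that majorization is generated by these elementary steps.

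First I would set up the induction. Since $\sum_l p_l=\sum_l q_l$, I would measure the ``distance'' from $(p)$ to $(q)$ by a nonnegative integer quantity such as $D=\tfrac12\sum_{l=1}^k|p_l-q_l|$, or equivalently $\sum_{l:p_l<q_l}(q_l-p_l)$. If $D=0$ then $(p)=(q)$ and the chain is trivial. Otherwise I would produce a single tuple $(p')$ with $(p)\prec(p')\prec(q)$, where $(p)\prec(p')$ satisfies ($S$) and the distance from $(p')$ to $(q)$ has strictly decreased; iterating then yields the desired finite chain. The whole argument is then finished by induction on $D$.

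The construction of the intermediate step $(p')$ is the technical heart. Assuming $(p)\neq(q)$, I would choose the largest index $l_2$ with $p_{l_2}>q_{l_2}$ and the smallest index $l_1$ with $p_{l_1}<q_{l_1}$; the partial-sum inequalities $\sum_{l=1}^m p_l\le\sum_{l=1}^m q_l$ force $l_1<l_2$. I then set $p'_{l_1}=p_{l_1}+1$, $p'_{l_2}=p_{l_2}-1$, and $p'_l=p_l$ otherwise. I must verify three things: that $(p')$ is a legitimate tuple in the sense of Definition~\ref{th:definicja} (nonnegative and \emph{weakly decreasing}, $p'_1\ge\dots\ge p'_k$), that $(p)\prec(p')$ holds and satisfies ($S$), and that $(p')\prec(q)$ holds with smaller distance to $(q)$. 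Nonnegativity and the ordering of $(p')$ are where one must be careful: raising $p_{l_1}$ could violate $p'_{l_1-1}\ge p'_{l_1}$, and lowering $p_{l_2}$ could violate $p'_{l_2}\ge p'_{l_2+1}$. The clean way around this is to choose $l_1,l_2$ more cleverly --- namely pick an adjacent pair of indices, or invoke that the extremal choices above together with $p_l=q_l$ on the intervening agreement blocks keep the monotonicity intact. This monotonicity bookkeeping is the main obstacle; everything else reduces to unwinding the definitions of $\prec$ and ($S$) and checking the telescoping partial-sum inequalities.

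A convenient and fully rigorous route that sidesteps the sorting subtleties is to define $l_2$ as the largest index with $p_{l_2}>q_{l_2}$ and then let $l_1$ be the largest index \emph{below} $l_2$ with $p_{l_1}<q_{l_1}$; minimality/maximality guarantees $p_l=q_l$ strictly between them only when those intervening values are pinned, and one checks directly that the transfer preserves the weakly decreasing order because $p_{l_1}<q_{l_1}\le q_{l_1-1}$-type comparisons supply the needed slack. I expect the verification that each partial sum $\sum_{l=1}^m p'_l$ still lies weakly below $\sum_{l=1}^m q_l$ to be immediate: the partial sums of $(p')$ equal those of $(p)$ except on the block $l_1\le m<l_2$, where they are increased by exactly $1$, and the defining inequalities for $(p)\prec(q)$ together with the choice of $l_1,l_2$ absorb this increase. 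Once these checks are in place, the induction on $D$ delivers the chain $(p)=(p^0)\prec(p^1)\prec\dots\prec(p^I)=(q)$ with every consecutive pair satisfying ($S$), completing the proof.
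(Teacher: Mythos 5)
Your high-level strategy --- induct on a distance between $(p)$ and $(q)$, performing one elementary transfer satisfying ($S$) at each step --- is exactly the paper's strategy, but both concrete choices of $l_1,l_2$ that you propose fail, and they fail precisely at the point you yourself flag as ``the technical heart.'' For your primary choice ($l_1$ the smallest index with $p_{l_1}<q_{l_1}$, $l_2$ the largest index with $p_{l_2}>q_{l_2}$), take $(p)=(3,3,2,2,1,1)$ and $(q)=(4,2,2,2,2,0)$: then $(p)\prec(q)$, your rule gives $l_1=1$, $l_2=6$, and the transfer yields $(p')=(4,3,2,2,1,0)$, whose second partial sum is $7>6=q_1+q_2$, so $(p')\prec(q)$ fails and the chain cannot be continued. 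Your claim that the inequalities defining $(p)\prec(q)$ ``absorb'' the $+1$ on the block $l_1\le m<l_2$ requires $\sum_{l=1}^m(q_l-p_l)\ge1$ on that \emph{entire} block, which your choice does not guarantee (here the difference is $0$ at $m=2,3,4$). For your alternative choice ($l_2$ the largest index with $p_{l_2}>q_{l_2}$, then $l_1$ the largest index below $l_2$ with $p_{l_1}<q_{l_1}$), take $(p)=(2,2,2,2)$ and $(q)=(3,3,1,1)$: then $l_2=4$, $l_1=2$, and the transfer yields $(p')=(2,3,2,1)$, which is not weakly decreasing, hence not a legitimate tuple in the sense of Definition~\ref{th:definicja}; the slack you invoke via $q_{l_1}\le q_{l_1-1}$ is unavailable because $p_{l_1-1}<q_{l_1-1}$ at this index.

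The missing idea is that $l_1,l_2$ must be chosen from the \emph{partial sums}, not from pointwise comparisons. Set $d_m=\sum_{l=1}^m(q_l-p_l)\ge0$ and pick a maximal block of strict positivity: $d_m>0$ for $l_1\le m<l_2$ and $d_{l_1-1}=d_{l_2}=0$. This is what the paper does, and it resolves both problems simultaneously: first, $d_{l_1-1}=0$ together with $d_{l_1-2}\ge0$ and $d_{l_1}>0$ forces $q_{l_1-1}\le p_{l_1-1}$ and $q_{l_1}>p_{l_1}$, whence $p_{l_1-1}\ge q_{l_1-1}\ge q_{l_1}\ge p_{l_1}+1$, so incrementing $p_{l_1}$ preserves the weakly decreasing order (and symmetrically $d_{l_2}=0$, $d_{l_2-1}>0$, $d_{l_2+1}\ge0$ give $p_{l_2}-1\ge p_{l_2+1}$); second, $d_m\ge1$ on the whole block being incremented guarantees $(p')\prec(q)$ after the transfer. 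With this choice your induction (on $\tfrac12\sum_l|p_l-q_l|$, or on $\sum_m d_m$ as the paper effectively does via a minimal counterexample) goes through; without it, the argument breaks at the first step.
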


\begin{proof}
Aiming for a contradiction, suppose that the conclusion of the lemma does not hold
for some pair $(p)\prec(q)$. Among all such pairs we consider a pair
minimizing the value of $\sum_{m=1}^k\sum_{l=1}^m(q_l-p_l)$
(note that $(p)\prec(q)$ implies that $\sum_{m=1}^k\sum_{l=1}^m(q_l-p_l)$
is a non-negative integer).

We have $(p)\neq(q)$ (otherwise $(p)=(p^0)=(q)$ and $(p)\prec(q)$ satisfies the conclusion of the lemma for $I=0$),
hence at least one of the non-negative numbers $\sum_{l=1}^m(q_l-p_l)$
(with $m=1,\dots,k$) is strictly positive.
Let $1\leq l_1<l_2\leq k$ be such that $\sum_{l=1}^m(q_l-p_l)>0$ for $l_1\leq m<l_2$
and $\sum_{l=1}^{l_1-1}(q_l-p_l)=\sum_{l=1}^{l_2}(q_l-p_l)=0$.
We define $(r)$ as follows: $r_{l_1}=p_{l_1}+1$, $r_{l_2}=p_{l_2}-1$ and $r_l\ =p_l$  for $l\notin\{l_1,l_2\}$.
We will show that $r_1\geq r_2\geq\dots\geq r_k$.
In view of $p_1\geq p_2\geq\dots\geq p_k$, it is enough to show $p_{l_1-1}>p_{l_1}$ (if $l_1>1$) and $p_{l_2}>p_{l_2+1}$ (if $l_2<k$).

If $l_1>1$, then $\sum_{l=1}^{l_1-2}(q_l-p_l)\geq0$, $\sum_{l=1}^{l_1-1}(q_l-p_l)=0$ and $\sum_{l=1}^{l_1}(q_l-p_l)>0$
imply $q_{l_1-1}\leq p_{l_1-1}$ and $q_{l_1}>p_{l_1}$.
Taking into account $q_{l_1-1}\geq q_{l_1}$, we obtain $p_{l_1-1}>p_{l_1}$.

Similarly, if $l_2<k$, then $\sum_{l=1}^{l_2+1}(q_l-p_l)\geq0$, $\sum_{l=1}^{l_2}(q_l-p_l)=0$ and $\sum_{l=1}^{l_2-1}(q_l-p_l)>0$
imply $q_{l_2+1}\geq p_{l_2+1}$ and $q_{l_2}<p_{l_2}$.
Taking into account $q_{l_2}\geq q_{l_2+1}$, we get $p_{l_2}>p_{l_2+1}$.

Now, we will show that $(p)\prec(r)\prec(q)$.
If $l_1\leq m<l_2$, then $\sum_{l=1}^m(r_l-p_l)=1$ and $\sum_{l=1}^m(q_l-r_l)=\sum_{l=1}^m(q_l-p_l)-1\geq0$.
If $m<l_1$ or $m\geq l_2$, then $\sum_{l=1}^m(r_l-p_l)=0$ and $\sum_{l=1}^m(q_l-r_l)=\sum_{l=1}^m(q_l-p_l)\geq0$.
It follows that $(p)\prec(r)\prec(q)$. By the definition of $(r)$ we obtain that ($S$)
holds for $(p)\prec(r)$.

We have
$\sum_{m=1}^k\sum_{l=1}^m(q_l-r_l)=\sum_{m=1}^k\sum_{l=1}^m(q_l-p_l)-(l_2-l_1)<\sum_{m=1}^k\sum_{l=1}^m(q_l-p_l).$
By the minimality of the pair $(p)\prec(q)$, we infer that the pair $(r)\prec(q)$ satisfies the conclusion of the lemma,
hence there exist $(r)=(p^1)\prec(p^2)\prec\dots\prec(p^I)=(q)$
such that $(p^{i-1})\prec(p^i)$ satisfies ($S$) for $i=2,\dots,I$.
Then $(p)=(p^0)\prec(p^1)\prec\dots\prec(p^I)=(q)$ and
$(p^{i-1})\prec(p^i)$ satisfies ($S$) for $i=1,\dots,I$, which is
a contradiction. It follows that the conclusion of the lemma holds for each pair $(p)\prec(q)$.
\end{proof}

\begin{lemma}\label{lm:3.12}
Let $k\in\mathbb N$. Assume that the probability distributions $\mu_1,\dots,\mu_k$ have finite first moments
and they are pairwise comparable in the usual stochastic order. If $(p)\prec(q)$ satisfies condition ($S$),
then $\mu^{(p)}\lcx\mu^{(q)}$.
\end{lemma}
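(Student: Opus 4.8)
The plan is to reduce the statement to a single two-measure inequality by a pairing trick over permutations, and then to settle that inequality by a coupling together with the Hardy--Littlewood--P\'olya rearrangement of Remark~\ref{th:HLP}, in the spirit of the proof of Theorem~\ref{th:condition}. Let $1\le l_1<l_2\le k$ be the indices furnished by condition~($S$), and write $s=p_{l_1}$, $t=p_{l_2}$; since $(p)$ is non-increasing and $q_{l_2}=p_{l_2}-1\ge0$, we have $s\ge t\ge1$. Let $\tau$ be the transposition exchanging $l_1$ and $l_2$, so that $\pi\mapsto\pi\tau$ is a fixed-point-free involution of $\Pi$ splitting it into $k!/2$ two-element orbits. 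For a fixed $\pi$ put $\mu_a=\mu_{\pi(l_1)}$ and $\mu_b=\mu_{\pi(l_2)}$, and let $\sigma_\pi$ be the convolution of the factors $(\mu_{\pi(l)})^{*p_l}$ over $l\notin\{l_1,l_2\}$; since $\tau$ fixes those positions, $\sigma_{\pi\tau}=\sigma_\pi$. Commutativity of convolution and Definition~\ref{th:definicja} then give
\begin{align*}
\mu^{(p)}_\pi+\mu^{(p)}_{\pi\tau}&=\sigma_\pi*\bigl(\mu_a^{*s}*\mu_b^{*t}+\mu_a^{*t}*\mu_b^{*s}\bigr),\\
\mu^{(q)}_\pi+\mu^{(q)}_{\pi\tau}&=\sigma_\pi*\bigl(\mu_a^{*(s+1)}*\mu_b^{*(t-1)}+\mu_a^{*(t-1)}*\mu_b^{*(s+1)}\bigr).
\end{align*}

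The relation $\lcx$ is stable under convolution with a fixed distribution (condition on that distribution and use convexity of $x\mapsto\varphi(x+c)$) and is additive. Hence, convolving the ``core'' inequality
\[
\mu_a^{*d}*\mu_b+\mu_a*\mu_b^{*d}\ \lcx\ \mu_a^{*(d+1)}+\mu_b^{*(d+1)},\qquad d:=s-t+1\ge1,
\]
with the fixed distribution $\sigma_\pi*\mu_a^{*(t-1)}*\mu_b^{*(t-1)}$ yields $\mu^{(p)}_\pi+\mu^{(p)}_{\pi\tau}\lcx\mu^{(q)}_\pi+\mu^{(q)}_{\pi\tau}$ for each orbit. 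Summing these inequalities over the $k!/2$ orbits (one representative per orbit recovers $\sum_\pi\mu^{(p)}_\pi$) and dividing by $k!$ gives $\mu^{(p)}\lcx\mu^{(q)}$. So everything reduces to the core inequality.

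The core inequality generalizes Theorem~\ref{th:condition} (its $d=1$ case is exactly $2\,\mu_a*\mu_b\lcx\mu_a*\mu_a+\mu_b*\mu_b$) and is symmetric in $\mu_a,\mu_b$; by pairwise comparability I may therefore assume $\mu_a\lst\mu_b$. By Theorem~\ref{th:1a1} I choose independent pairs $(X_0,Y_0),\dots,(X_d,Y_d)$ with $X_i\sim\mu_a$, $Y_i\sim\mu_b$ and $X_i\le Y_i$ almost surely, and set $S_X=X_1+\dots+X_d$ and $S_Y=Y_1+\dots+Y_d$, so that $S_X\le S_Y$. Then $X_0+S_X\sim\mu_a^{*(d+1)}$, $Y_0+S_Y\sim\mu_b^{*(d+1)}$, $Y_0+S_X\sim\mu_a^{*d}*\mu_b$ and $X_0+S_Y\sim\mu_a*\mu_b^{*d}$. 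From $X_0\le Y_0$ and $S_X\le S_Y$, both mixed sums $Y_0+S_X$ and $X_0+S_Y$ lie in the interval $[\,X_0+S_X,\ Y_0+S_Y\,]$, and $(Y_0+S_X)+(X_0+S_Y)=(X_0+S_X)+(Y_0+S_Y)$. Thus Remark~\ref{th:HLP} gives $\varphi(Y_0+S_X)+\varphi(X_0+S_Y)\le\varphi(X_0+S_X)+\varphi(Y_0+S_Y)$ pointwise for every convex $\varphi$, and taking expectations proves the core inequality.

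The finite-first-moment hypothesis enters exactly as in Theorem~\ref{th:condition}, guaranteeing that all integrals of convex $\varphi$ against the convolutions are well defined so that the expectations above make sense. I expect the only genuine difficulty to be the core inequality, and within it the choice of coupling: one must arrange the $d+1$ coupled pairs so that the two mixed convolutions appear as the ``inner'' pair and the two pure convolutions as the ``outer'' pair of a Hardy--Littlewood--P\'olya configuration with equal sums. Once this arrangement is found the estimate is immediate, but it is the step that truly extends Theorem~\ref{th:condition} beyond $d=1$. The surrounding combinatorics --- the involution $\pi\mapsto\pi\tau$, the stability and additivity of $\lcx$, and the reduction to the core --- is routine bookkeeping, the only points requiring care being the inequalities $s\ge t\ge1$ (so that $d\ge1$ and the factor $\mu_a^{*(t-1)}*\mu_b^{*(t-1)}$ is legitimate) and the symmetry of $\mu^{(p)}$ in the labels, which is what permits the two orbit representatives to be treated together.
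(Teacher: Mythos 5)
Your proof is correct, and its engine is the same as the paper's: your pairing $\pi\leftrightarrow\pi\tau$ is exactly the paper's pairing $\pi\leftrightarrow\pi'$, the monotone coupling comes from Theorem~\ref{th:1a1}, and the decisive step is Remark~\ref{th:HLP} applied to four sums with equal totals. The difference is organizational. The paper realizes all four measures $\mu^{(p)}_\pi$, $\mu^{(p)}_{\pi'}$, $\mu^{(q)}_\pi$, $\mu^{(q)}_{\pi'}$ by one global coupling (its variables $A$, $B$, $C$, $D$), letting the common factor---the variable $Z$ together with the matched pairs shared by all four sums---ride along inside the coupling, and exhibits the HLP configuration $A\leq B$, $C\leq D$, $A+D=B+C$ directly. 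You instead strip that common factor out as a convolution, reduce each orbit to the two-measure core inequality $\mu_a^{*d}*\mu_b+\mu_a*\mu_b^{*d}\lcx\mu_a^{*(d+1)}+\mu_b^{*(d+1)}$ (whose $d=1$ case is Theorem~\ref{th:condition}), and re-attach the factor via stability of $\lcx$ under convolution; in fact your core coupling with $d+1$ matched pairs is precisely what remains of the paper's coupling after the shared variables cancel. Your packaging buys modularity and a reusable core statement that visibly generalizes Theorem~\ref{th:condition}; its price is the extra (standard) convolution-stability lemma, which needs the same integrability care---convex functions are bounded below by affine ones, and all first moments are finite---that the paper dispatches inline by noting that neither side of its inequality can equal $-\infty$. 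Your closing remarks, together with the checks $s\geq t\geq 1$ and the fixed-point-freeness of $\pi\mapsto\pi\tau$, cover these points, so the argument is complete.
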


\begin{proof}
Reordering if necessary, we may assume without loss of generality, that $\mu_1\lst\mu_2\lst\dots\lst\mu_k$.
Let $\varphi:\mathbb R\to\mathbb R$ be a convex function.
We need to show
\begin{equation}\label{eq:ghfh}
\tfrac1{k!}\cdot\sum_{\pi\in\Pi}\E\varphi(X^{(p)}_\pi)\leq\tfrac1{k!}\cdot\sum_{\pi\in\Pi}\E\varphi(X^{(q)}_\pi),
\end{equation}
where $X^{(p)}_\pi$ and $X^{(q)}_\pi$ are any random variables satisfying $X^{(p)}_\pi\sim\mu^{(p)}_\pi$
and $X^{(q)}_\pi\sim\mu^{(q)}_\pi$.

We fix $1\leq l_1<l_2\leq k$ from condition ($S$) for $(p)\prec(q)$.
We have $q_{l_1}=p_{l_1}+1>p_{l_1}\geq p_{l_2}>p_{l_2}-1=q_{l_2}$ and $q_l=p_l$ for $l\notin\{l_1,l_2\}$.
We fix any $1\leq u<v\leq k$ and we consider any permutation
$\pi\in\Pi$ satisfying $\pi(l_1)=u$ and $\pi(l_2)=v$.
Let $\pi'\in\Pi$ be given by $\pi'(l_1)=\pi(l_2)=v$, $\pi'(l_2)=\pi(l_1)=u$ and $\pi'(l)=\pi(l)$ for $l\notin\{l_1,l_2\}$.
We define the random variables
$Z_l^s$ (with $l\in\{1,\dots,k\}\setminus\{l_1,l_2\}$ and $1\leq s\leq p_l$), $U_i$ and $V_i$ (with $1\leq i\leq p_{l_1}+p_{l_2}$)
such that
\begin{enumerate}[\upshape (i)]
\item all random variables $Z_l^s$ and random vectors $(U_i,V_i)$ are independent,
\item $Z_l^s\sim\mu_{\pi(l)}$ for $1\leq w\leq k$ and $1\leq s\leq p_{\pi(w)}$,
\item $P(U_i\leq V_i)=1$, $U_i\sim \mu_u$ and $V_i\sim\mu_v$ for $1\leq i\leq p_{l_1}+p_{l_2}$
(here we use our assumption $\mu_u\lst\mu_v$ and Theorem \ref{th:1a1}).
\end{enumerate}
Finally, we put
\[Z:=\sum_{l\notin\{l_1,l_2\}}\sum_{s=1}^{p_l}Z_l^s,\qquad
A:=Z+\sum_{i=1}^{p_{l_1}+1}U_i+\sum_{i=p_{l_1}+2}^{p_{l_1}+p_{l_2}}V_i,
\qquad D:=Z+\sum_{i=1}^{p_{l_2}-1}U_i+\sum_{i=p_{l_2}}^{p_{l_1}+p_{l_2}}V_i\]
\\
\[B:=Z+\sum_{i=1}^{p_{l_1}}U_i+\sum_{i=p_{l_1}+1}^{p_{l_1}+p_{l_2}}V_i,
\qquad C:=Z+\sum_{i=1}^{p_{l_2}-1}U_i+\sum_{i=p_{l_2}}^{p_{l_1}}V_i+U_{p_{l_1}+1}+\sum_{i=p_{l_1}+2}^{p_{l_1}+p_{l_2}}V_i.\]
\\
We have: $B\sim\mu^{(p)}_\pi$,
$C\sim\mu^{(p)}_{\pi'}$,
$A\sim\mu^{(q)}_\pi$,
$D\sim\mu^{(q)}_{\pi'}$ and $A+D=B+C$.
Moreover, $P(A\leq B,C\leq D)=1$.
By Remark \ref{th:HLP} we obtain
$P(\varphi(B)+\varphi(C)\leq\varphi(A)+\varphi(D))=1$, hence
\begin{equation}\label{eq:AKnier}
\E(\varphi(X^{(p)}_\pi)+\varphi(X^{(p)}_{\pi'}))=
\E(\varphi(B)+\varphi(C))\leq
\E(\varphi(A)+\varphi(D))=
\E(\varphi(X^{(q)}_\pi)+\varphi(X^{(q)}_{\pi'})).
\end{equation}
The random variables $B$, $C$, $A$, $D$ play a role, similar to the role that play the random variables $X_1+Y_2$, $X_2+Y_1$, $X_1+X_2$, $Y_1+Y_2$ in the proof of Theorem \ref{th:condition}.
Note that the left side or both sides of \eqref{eq:AKnier} can be equal to $+\infty$.
However, our assumption about finite first moments of $\mu_1,\dots,\mu_k$ ensures
that neither left nor right side of \eqref{eq:AKnier} can be equal to $-\infty$.
Therefore, we may take the sum of the inequalities \eqref{eq:AKnier} over all $\pi\in\Pi$ satisfying $\pi(l_1)=u$, $\pi(l_2)=v$
and over all $u<v$. We obtain
\begin{multline*}
\sum_{\pi\in\Pi}\E\varphi(X^{(p)}_\pi)=
\sum_{1\leq u<v\leq k}\sum_{\substack{\pi\in\Pi\\\pi(l_1)=u\\\pi(l_2)=v}}
\E(\varphi(X^{(p)}_\pi)+\varphi(X^{(p)}_{\pi'}))\leq\\
\sum_{1\leq u<v\leq k}\sum_{\substack{\pi\in\Pi\\\pi(l_1)=u\\\pi(l_2)=v}}
\E(\varphi(X^{(q)}_\pi)+\varphi(X^{(q)}_{\pi'}))=
\sum_{\pi\in\Pi}\E\varphi(X^{(q)}_\pi),
\end{multline*}
\\
which is equivalent to \eqref{eq:ghfh}. The proof is finished.
\end{proof}
As an immediate consequence of Lemmas \ref{lm:3.11} and \ref{lm:3.12}, we obtain our main theorem:
\begin{theorem}\label{th:main}
Let $k\in\mathbb N$. Let $\mu_1,\dots,\mu_k$ be probability distributions with finite first moments ($\int|x|\mu_l(dx)<\infty$ for $l=1,\dots,k$).
If $\mu_1,\dots,\mu_k$ are pairwise comparable in the usual stochastic order
(for each $1\leq i,j\leq k$ we have $\mu_i\lst\mu_j$ or $\mu_i\gst\mu_j$),
then
\[(p)\prec(q)\Longrightarrow\mu^{(p)}\lcx\mu^{(q)}.\]
\end{theorem}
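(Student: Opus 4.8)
The plan is to obtain the theorem as a purely formal consequence of the two preceding lemmas, via the transitivity of the convex order. First I would apply Lemma~\ref{lm:3.11} to the given pair $(p)\prec(q)$, producing a finite chain
\[(p)=(p^0)\prec(p^1)\prec\dots\prec(p^I)=(q)\]
in which every consecutive pair $(p^{i-1})\prec(p^i)$ satisfies condition~($S$).

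Next, I would note that the hypotheses imposed on $\mu_1,\dots,\mu_k$ (finite first moments and pairwise comparability in $\lst$) are exactly those of Lemma~\ref{lm:3.12}, and that they concern only the distributions themselves, not the particular tuple. Hence Lemma~\ref{lm:3.12} applies to each link of the chain and yields $\mu^{(p^{i-1})}\lcx\mu^{(p^i)}$ for every $i=1,\dots,I$.

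Finally I would chain these $I$ relations together using transitivity of $\lcx$: if $\mu\lcx\nu$ and $\nu\lcx\rho$, then for every convex $\varphi$ one has $\int\varphi\,d\mu\leq\int\varphi\,d\nu\leq\int\varphi\,d\rho$, and so $\mu\lcx\rho$. Composing along the whole chain gives $\mu^{(p)}=\mu^{(p^0)}\lcx\mu^{(p^I)}=\mu^{(q)}$, which is the desired conclusion.

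There is essentially no remaining obstacle: all the genuine work has already been discharged in Lemmas~\ref{lm:3.11} and~\ref{lm:3.12}, the former reducing an arbitrary majorization to a succession of single transposition-type steps, the latter handling one such step. The only point meriting a moment's attention is the transitivity of the convex order, which is immediate from its integral definition; the finite-first-moment hypothesis (already exploited in Lemma~\ref{lm:3.12} to exclude the indeterminate form $\infty-\infty$) guarantees that the intermediate integrals are well defined throughout the chaining.
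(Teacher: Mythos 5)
Your proposal is correct and matches the paper exactly: the authors state Theorem~\ref{th:main} as an immediate consequence of Lemmas~\ref{lm:3.11} and~\ref{lm:3.12}, which is precisely your chain-plus-transitivity argument. Your added remark on why transitivity of $\lcx$ and the finite-first-moment hypothesis make the chaining legitimate is a sensible elaboration of what the paper leaves implicit.
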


\begin{remark}
Theorem \ref{th:main} is an analogue of Muirhead's inequality (see \cite{Hardy1952}, Theorem 45 or \cite{MarshallOlkin2011}, Section 3G)
with positive numbers replaced by probability distributions, multiplication replaced by convolution, and $\leq$ replaced by $\lcx$.
Moreover, if $x_1,\dots,x_k>0$, then applying Theorem \ref{th:main} with $\mu_l=\delta_{\ln x_l}$ (for $l=1,\dots,k$)
and the convex function $\varphi(x)=e^x$, we obtain the classical Muirhead inequality with integer exponents.
\end{remark}

\begin{example}
If we apply Theorem \ref{th:main}:
\begin{enumerate}[\upshape (i)]
\item
for $k=2$, $(p)=(1,1)$ and $(q)=(2,0)$, then we obtain $\mu*\nu\lcx\frac12(\mu*\mu+\nu*\nu)$
(Theorem \ref{th:condition}),
\item
for $k=m$, $(p)=(1,\dots,1)$ and $(q)=(m,0,\dots,0)$, we get 
$$\mu_1*\dots*\mu_m\lcx\tfrac1m\left[(\mu_1)^{*m}+\dots+(\mu_m)^{*m}\right],$$
which gives the following Ra\c{s}a type inequality (proved in \cite{MRW2017})
\begin{multline*}
 \sum_{i_1,\dots,i_m=0}^n\bigl(b_{n,i_1}(x_1)\cdots b_{n,i_m}(x_1)+\dots+b_{n,i_1}(x_m)\dots b_{n,i_m}(x_m)\\
 -mb_{n,i_1}(x_1)\dots b_{n,i_m}(x_m)\bigr)\varphi\left(\tfrac{i_1+\dots+i_m}{mn}\right)\geq0
\end{multline*}
in the case of $\mu_i=B(n,x_i)$ $(x_i\in[0,1])$ for $i=1,\dots,m$,
\item
for $k=3$, $(p)=(1,1,1)$, $(q)=(2,1,0)$, if $\mu\lst\nu\lst\kappa$, then we get
$$\mu*\nu*\kappa\lcx\tfrac16(\mu*\mu*\nu+\mu*\mu*\kappa+\nu*\nu*\kappa+\nu*\nu*\mu+\kappa*\kappa*\mu+\kappa*\kappa*\nu).$$
\end{enumerate}
Taking binomial, Poisson, negative binomial, gamma, exponential, beta or Gaussian distributions,
as an immediate consequence of Theorem~\ref{th:main} and Lemma~\ref{th:rozklady},
we can obtain several generalizations of the I.\ Ra\c{s}a type inequalities.
\end{example}
One might expect that every polynomial inequality valid for non-negative real numbers has its counterpart
for probability distributions and convex orders. The following example shows that it is very far from true.
\begin{example}
Let $V(x,y)=\frac12x^3y+\frac12xy^3$ and $W(x,y)=\frac18x^4+\frac34x^2y^2+\frac18y^4$.
The polynomials $V$ and $W$ are homogeneous polynomials of degree $4$.
We have $W(x,y)-V(x,y)=\frac18(x-y)^4\geq0$ for every $x,y\in\mathbb R$.
Both $V$ and $W$ have non-negative coefficients and $V(1,1)=W(1,1)=1$.
It follows that $V(\mu,\nu)$ and $W(\mu,\nu)$ are probability distributions
whenever $\mu$ and $\nu$ are probability distributions.
If the expected values (means) $\mathbb E\mu$ and $\mathbb E\nu$ are finite,
then $\mathbb E V(\mu,\nu)=2(\mathbb E\mu+\mathbb E\nu)=\mathbb E W(\mu,\nu)$.
Despite all this regularity the inequality $V(\mu,\nu)\lcx W(\mu,\nu)$
does not need to be valid for $\mu\lst\nu$.
Indeed, let $\mu=\delta_0$ and $\nu=\frac12\delta_0+\frac12\delta_1$.
Then $\mu\lst\nu$, $V(\mu,\nu)=\frac5{16}\delta_0+\frac7{16}\delta_1+\frac3{16}\delta_2+\frac1{16}\delta_3$
and $W(\mu,\nu)=\frac{41}{128}\delta_0+\frac{52}{128}\delta_1+\frac{30}{128}\delta_2+\frac4{128}\delta_3+\frac1{128}\delta_4$.
For the convex function $\varphi(x)=\max(0,x-2)$ we have $\int\varphi(x)V(\mu,\nu)(dx)=\frac1{16}>\frac6{128}=\int\varphi(x)W(\mu,\nu)(dx)$,
hence $V(\mu,\nu)\not\lcx W(\mu,\nu)$.
\end{example}





\bibliographystyle{elsarticle-num}



\end{document}